\def\N{{\mathbb{N}}}
\def\R{{\mathbb{R}}}
\theoremstyle{plain}
\newtheorem{theorem}{Theorem}
\newtheorem{proposition}{Proposition}
\newtheorem{definition}{Definition}
\newtheorem{lemma}{Lemma} 
\theoremstyle{remark}
\newtheorem{example}{Examples}
\title[Probabilistic Arzela-Ascoli Theorem ]{Probabilistic Arzela-Ascoli Theorem }
\author{Mohammed Bachir, Bruno Nazaret}
\begin{document}

\date{05/03/2019} 
\subjclass{54E70,  46S50.}
\address{Laboratoire SAMM 4543, Universit\'e Paris 1 Panth\'eon-Sorbonne\\
Centre P.M.F. 90 rue Tolbiac\\
75634 Paris cedex 13\\
France}

\email{Mohammed.Bachir@univ-paris1.fr}
\email{Bruno.Nazaret@univ-paris1.fr}
\begin{abstract}
We prove that, in the space of all probabilistic continuous functions from a probabilistic metric space $G$ to the set $\Delta^+$ of all cumulative distribution functions vanishing at $0$, the space of all $1$-Lipschitz functions is compact if and only if the space $G$ is compact. This gives a probabilistic Arzela-Ascoli type Theorem.

\end{abstract}
\maketitle
{\bf Keywords:} Probabilistic metric space; Probabilistic $1$-Lipschitz map; Probabilistic Arzela-Ascoli type Theorem.
\vskip5mm
{\bf msc:} 54E70,  46S50.
\tableofcontents

\section{\bf Introduction}
The general concept of probabilistic metric spaces was introduced by K. Menger, who dealt with probabilistic geometry \cite{M1}, \cite{M2}, \cite{M3}. The decisive influence on the development of the theory of probabilistic metric spaces is due to B. Schweizer and A. Sklar and their coworkers in several papers \cite{SS0}, \cite{SS1}, \cite{SS2}, \cite{SS3}, see also \cite{SHE}, \cite{SHE1} \cite{KMP2}, \cite{HP} and \cite{HP1}. For more informations about this theory we refeer to the excellent monograph \cite{S.S}. 
\vskip5mm
Recently, the first author introduced in \cite{Ba} a natural concept of probabilistic Lipschitz maps defined from a probabilistic metric space $G$ into the set of all cumulative distribution functions that vanish at $0$, classically denoted by $\Delta^+$. In particular, the introduction of the space of all probabilistic $1$-Lipschitz maps provides a new method for the completion of probabilistic metric spaces extending a result of H. Sherwood in \cite{SHE1}. It also leads to a probabilistic version of the Banach-Stone theorem (see for instance \cite{Ba} and \cite{Ba2}).

\vskip5mm

In this paper, we investigate new properties of the space of all $1$-Lipschitz probabilistic maps defined on probabilistic compact metric spaces. More precisely, we give in our main Theorem \ref{AA} a probabilistic Arzela-Ascoli theorem, proving that the space of all $1$-Lipschitz probabilistic maps is a compact subset of the space of all probabilistic continuous functions equipped with the uniform modified L\'evy distance that we introduce in the next section.

\vskip5mm

This paper is organized as follows. In Section \ref{S1}, we recall classical results and notions about probabilistic metric spaces, triangle functions, the L\'evy distance and the weak convergence. In Section \ref{S2}, we recal the notion of probabilistic Lipschitz maps and probabilistic continuous functions introduced in \cite{Ba}. We also give some properties related to these notions. In Section \ref{S3}, we prove the Theorem \ref{AA}, which is our main result.

\section{\bf Classical Notions of Probabilistic Metric Spaces.} \label{S1}
In this section, we recall some general well know definitions and concepts about probabilistic metric spaces, as they can be found in \cite{HP}, \cite{S.S}, \cite{KMP2} and \cite{KMP3}. 
\vskip5mm
A (cumulative) distribution function is a function $F : [-\infty, +\infty] \longrightarrow [0, 1]$, nondecreasing
and left-continuous with $F(-\infty) = 0$; $F(+\infty) = 1$. The set of all distribution functions satisfying
$F(0) = 0$ will be denoted by $\Delta^+$. For $F, G \in \Delta^+$, the relation $F \leq G$ is understood as $F(t)\leq G(t)$, for  $t\in \R$. For all $a\in \R$, the distribution function $\mathcal{H}_a$ is defined by \[ \mathcal{H}_a(t)=
\left\{
\begin{array}{rl}
0& \textnormal{ if } t\leq a \\
1& \textnormal{ if } t>a,
\end{array}
\right.
\] 
and, for $a=+\infty$, by 
\[ \mathcal{H}_{\infty}(t)=
\left\{
\begin{array}{rl}
0& \textnormal{ if } t\in [-\infty, +\infty[ \\
1& \textnormal{ if } t=+\infty
\end{array}
\right.
\] 
It is well known that $(\Delta^+, \leq)$ is a complete lattice with respectively $\mathcal{H}_{\infty}$ and $\mathcal{H}_0$ as minimal and maximal element. Thus, for any nonempty set $I$ and any familly $(F_i)_{i\in I}$ of distributions in $\Delta^+$, the function $F=\sup_{i\in I} F_i$ is also an element of $\Delta^+$.
 
\subsection{Triangle function and  probabilistic metric space}
\begin{definition} \label{axiom1}  A binary operation $\tau$ on $\Delta^+$ is called a triangle function if and only if it is  commutative, associative, non-decreasing in each place, and has $\mathcal{H}_0$ as neutral element. In other words:
\begin{itemize}
\item[(i)] $\tau(F, L) \in \Delta^+$ for all $F, L \in \Delta^+$.
\item[(ii)] $\tau(F, L)=\tau(L, F)$ for all $F, L \in \Delta^+$.
\item[(iii)] $\tau(F,\tau(L, K))=\tau(\tau(F, L), K)$, for all $F,L,K\in \Delta^+$.
\item[(iv)] $\tau(F, \mathcal{H}_0)=F$ for all $F\in \Delta^+$.
\item[(v)] $F\leq L \Longrightarrow \tau(F, K) \leq \tau(L, K)$ for all $F, L, K\in \Delta^+$.
\end{itemize}
\end{definition}
\begin{definition} A triangle function $\tau$ is  said to be sup-continuous (see for instance \cite{C3}) if for all nonempty set $I$ and all familly $(F_i)_{i\in I}$ of distributions in $\Delta^+$ and all $L\in\Delta^+$, we have $$\sup_{i\in I} \tau(F_i, L)=\tau(\sup_{i\in I}(F_i),L).$$
\end{definition}
\vskip5mm
For simplicity of notations, in all what follows, the triangle function $\tau$ will be denoted by the binary operation $\star$ as follows:
$$\tau(L,K):=L\star K.$$
It follows from the axioms $(i)$-$(iv)$ that $(\Delta^+,\star)$ is an abelian monoid having $\mathcal{H}_0$ as a neutral element.
\begin{definition} \label{contin} Let $\star$ be a triangle function on $\Delta^+$.

$(1)$ A sequence $(F_n)$ of distributions in $\Delta^+$ converges weakly to a function $F$ in $\Delta^+$ if $(F_n(t))$ converges to $F(t)$ at each point $t$ of continuity of $F$. In this case, we write indifferently $F_n \,{\xrightarrow {\textnormal{w}}}\, F$ or $\lim_n F_n =F$.

$(2)$ We say that the law $\star$ is continuous at $(F,L)\in \Delta^+\times \Delta^+$ if we have $F_n\star L_n \,{\xrightarrow {\textnormal{w}}}\, F\star L$, whenever $F_n \,{\xrightarrow {\textnormal{w}}}\, F$ and $L_n \,{\xrightarrow {\textnormal{w}}}\, L$. 
\end{definition}

A classical class of triangle functions, that are both continuous and sup-continuous, is provided by operations taking the form : for all $F, L\in \Delta^+$ and for all $t\in \R$,
\begin{equation}\label{eq0}
(F\star_T L)(t):=\sup_{s+u=t} T(F(s),L(u)),
\end{equation}
where $T:[0,1]\times[0,1]\to[0,1]$, usually called a triangular norm (see  \cite{S.S,HP}), is left-continuous and satisfies
\begin{itemize}
\item $T(x,y) = T(y,x)$ ( commutativity);
\item $T(x,T(y,z)) = T(T(x,y),z)$ (associativity);
\item $T(x,y) \leq T(x,z)$ whenever $y\leq z$ (monotonicity );
\item $T(x, 1) = x$ (boundary condition).	
\end{itemize}

\begin{definition} \label{definition.M} Let $G$ be a set and let $D : G\times G \longrightarrow (\Delta^+,\star, \leq)$ be a map. We say that $(G, D, \star)$ is a probabilistic metric space if the following axioms hold:
\begin{itemize}
\item[(i)] $D(p,q)=\mathcal{H}_0$ iff $p=q$.
\item[(ii)] $D(p,q)=D(q,p)$ for all $p,q\in G$ 
\item[(iii)] $D(p,q)\star D(q,r)\leq D(p,r)$ for all $p,q, r\in G$ 
\end{itemize}
\end{definition}

This notion of probabilistic distance naturally leads to associated metric concepts, such as Cauchy sequence and completeness.

\begin{definition} In a a probabilistic metric space $(G, D, \star)$, a sequence $(z_n)\subset G$ is said to be a Cauchy sequence if for all $t\in \R$,
$$\lim_{n,p \longrightarrow +\infty} D(z_n, z_p)(t)=\mathcal{H}_0(t).$$
(Equivalently, if $D(z_n, z_p)\,{\xrightarrow {\textnormal{w}}}\,\mathcal{H}_0$, when $n, p\longrightarrow +\infty$). A probabilistic metric space $(G, D, \star)$ is said to be complete if every Cauchy sequence $(z_n)\subset G$ weakly converges to some $z_{\infty}\in G$, that is $\lim_{n \rightarrow +\infty} D(z_n, z_{\infty})(t)=\mathcal{H}_0(t)$ for all $t\in \R$, we will briefly note $\lim_n D(z_n, z_{\infty})=\mathcal{H}_0$. 
\end{definition}
\begin{example} \label{example.1}  Every complete metric space induce a probabilistic complete metric space. Indeed, if $d$ is a complete metric on $G$ and $\star$ is a triangle function on $\Delta^+$ satisfying $\mathcal{H}_a\star \mathcal{H}_b=\mathcal{H}_{a+b}$ for all $a, b\in \R^+$  (see the example in the formula \ref{eq0} below and references \cite{S.S} and \cite{HP}), then $(G, D, \star)$ where,
$$D(p,q)=\mathcal{H}_{d(p,q)}, \hspace{1mm} \forall p,q \in G, $$ 
is a probabilistic complete metric space. 
\end{example}

\subsection{ L\'evy distance, weak convergence and compactness} 
\begin{definition}
Let $F$ and $G$ be in $\Delta^+$ , let $h$ be in $(0, 1]$, and let $A(F, G; h)$
denote the condition
$$0 \leq G(t) \leq F(t + h) + h$$
for all $t \in (0, h^{-1})$.
The modified L\'evy distance is the map $d_L$ defined on $\Delta^+ \times \Delta^+$ as
$$d_L(F, G) = \inf \lbrace h : \textnormal{ both } A(F, G; h) \textnormal{ and } A(G, F; h) \textnormal{ hold} \rbrace.$$
\end{definition}
Note that for any $F$ and $G$ in $\Delta^+$, both $A(F, G; 1)$ and $A(G, F; 1)$ hold, whence $d_L$
is well-defined and $d_L(F, G) \leq 1$.
\vskip5mm
Recall from \cite{SS4} that the map $F\mapsto d_L(F,\mathcal{H}_0)$ is non-increassing, that is
\begin{eqnarray*}
F,G\in \Delta^+, F\leq G\Longrightarrow d_L(G,\mathcal{H}_0) \leq d_L(F,\mathcal{H}_0).
\end{eqnarray*}
We also recall the following results due to D. Sibley in \cite[Theorem 1. and Theorem 2]{DS}.
\begin{lemma} \label{DS1} (\cite{DS}) The function $d_L$ is a metric on $\Delta^+$ and $(\Delta^+,d_L)$ is compact.
\end{lemma}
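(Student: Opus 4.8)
The plan is to check the metric axioms by hand and then prove compactness through a Helly-type selection argument, using the fact that $d_L$ metrizes weak convergence on $\Delta^+$.

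For the metric axioms, symmetry and non-negativity are built into the definition, and $d_L(F,F)=0$ since $A(F,F;h)$ says $F(t)\le F(t+h)+h$, which holds for every $h\in(0,1]$ by monotonicity. For $d_L(F,G)=0\Rightarrow F=G$, I would fix $t>0$ and let $h\to 0^+$ in $G(t)\le F(t+h)+h$ and $F(t)\le G(t+h)+h$; combining with monotonicity and left-continuity of $F$ and $G$ gives $F(t)=G(t)$ for all $t>0$, and since $F(s)=G(s)=0$ for $s\le 0$ (as $F(0)=G(0)=0$ and both vanish at $-\infty$), we conclude $F=G$. For the triangle inequality $d_L(F,H)\le d_L(F,G)+d_L(G,H)$, I would compose the defining conditions: assuming $A(F,G;h_1)$, $A(G,F;h_1)$, $A(G,H;h_2)$, $A(H,G;h_2)$, chaining $H(t)\le G(t+h_2)+h_2\le F(t+h_1+h_2)+h_1+h_2$ (and the symmetric estimate) yields $A(F,H;h_1+h_2)$ and $A(H,F;h_1+h_2)$. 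The only delicate point is the truncated range $(0,h^{-1})$: one may assume $h_1+h_2<1$ (otherwise $d_L\le 1$ makes the inequality trivial), and then $t<(h_1+h_2)^{-1}$ forces $t+h_2<h_1^{-1}$, so every inequality is applied inside its allowed range.

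For compactness, since $(\Delta^+,d_L)$ is a metric space it suffices to prove sequential compactness. Given $(F_n)\subset\Delta^+$, a diagonal argument over the rationals (Helly's selection theorem) produces a subsequence $(F_{n_k})$ and a nondecreasing function $F$ on $\R$ with $F_{n_k}(t)\to F(t)$ at every continuity point $t$ of $F$; replacing $F$ by its left-continuous version and setting $F(-\infty)=0$, $F(+\infty)=1$, one checks $F\in\Delta^+$. In particular $F(0)=0$ because the $F_{n_k}$ vanish on $(-\infty,0]$ and continuity points accumulate at $0$ from the left, and no constraint on $F$ near $+\infty$ is needed since $\mathcal{H}_{\infty}\in\Delta^+$ absorbs any mass escaping to infinity.

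It then remains to show $d_L(F_{n_k},F)\to 0$. Given $\e\in(0,1)$, choose continuity points $0=t_0<t_1<\dots<t_m$ of $F$ with $t_m>\e^{-1}$ and $t_i-t_{i-1}<\e$. For $k$ large enough that $|F_{n_k}(t_i)-F(t_i)|<\e$ for all $i$, any $t\in(0,\e^{-1})$ lies in some $[t_{i-1},t_i)$, and monotonicity gives $F_{n_k}(t)\le F_{n_k}(t_i)\le F(t_i)+\e\le F(t+\e)+\e$ (using $t+\e\ge t_{i-1}+\e\ge t_i$), together with the symmetric bound $F(t)\le F_{n_k}(t+\e)+\e$; hence $A(F_{n_k},F;\e)$ and $A(F,F_{n_k};\e)$ hold, so $d_L(F_{n_k},F)\le\e$. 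Thus $(\Delta^+,d_L)$ is sequentially compact, hence compact. I expect the main obstacle to be the bookkeeping around the truncated range $(0,h^{-1})$ in the triangle inequality and the convergence estimate, and verifying that the Helly limit genuinely lies in $\Delta^+$; once these are settled, the argument is the classical one for the Lévy metric, as carried out in \cite{DS}.
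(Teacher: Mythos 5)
The paper does not prove this lemma; it is quoted verbatim from Sibley's paper \cite{DS}, whose Theorems 1 and 2 are exactly the metric axioms plus the fact that $d_L$ metrizes weak convergence and is compact. Your argument is correct and is essentially Sibley's own: the chaining $H(t)\le G(t+h_2)+h_2\le F(t+h_1+h_2)+h_1+h_2$ with the range bookkeeping for the triangle inequality, and Helly selection followed by the $\varepsilon$-partition estimate at continuity points for sequential compactness (the only point worth making fully explicit is that letting $h\to 0^+$ in $G(t)\le F(t+h)+h$ only yields $G(t)\le F(t^+)$, so the identity-of-indiscernibles step genuinely needs the left-continuity argument you allude to, evaluating at $s<t$ and letting $s\uparrow t$).
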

\begin{lemma} \label{DS2} (\cite{DS}) Let $(F_n)$ be a sequence of functions in $\Delta^+$, and let $F$ be
an element of $\Delta^+$. Then $(F_n)$ converges weakly to $F$ if and only if $d_L(F_n, F)\longrightarrow 0$, when $n \longrightarrow +\infty$.
\end{lemma}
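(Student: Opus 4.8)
The plan is to establish the two implications separately; this is the classical fact that the (modified) L\'evy distance metrizes weak convergence, transported to $\Delta^+$. Throughout I will use that a function in $\Delta^+$ is nondecreasing and left-continuous, so it has at most countably many discontinuities and its set of continuity points is dense in $\R$. For the easy implication, I assume $d_L(F_n,F)\to 0$ and fix a continuity point $t$ of $F$; if $t\le 0$ then $F_n(t)=F(t)=0$, so take $t>0$. For each sufficiently small $\eta>0$ (small enough that $t-\eta>0$ and $t+\eta<\eta^{-1}$), choose $n$ large so that $d_L(F_n,F)<\eta$, which provides some $h\in(0,\eta)$ for which $A(F_n,F;h)$ and $A(F,F_n;h)$ both hold. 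Evaluating $A(F,F_n;h)$ at the point $t$ and $A(F_n,F;h)$ at the point $t-h$, and using monotonicity of $F$ together with $h<\eta$, gives
\[ F(t-\eta)-\eta\ \le\ F_n(t)\ \le\ F(t+\eta)+\eta . \]
Letting $n\to\infty$ and then $\eta\to 0^+$, left-continuity of $F$ takes care of the lower bound and continuity of $F$ at $t$ takes care of the upper bound, so $F_n(t)\to F(t)$; hence $F_n \,{\xrightarrow {\textnormal{w}}}\, F$.

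For the converse, I assume $F_n \,{\xrightarrow {\textnormal{w}}}\, F$ and fix $\varepsilon>0$; it is enough to produce $N$ such that $A(F_n,F;\varepsilon)$ and $A(F,F_n;\varepsilon)$ both hold for every $n\ge N$, since then $d_L(F_n,F)\le\varepsilon$. Using density of the continuity points of $F$, I choose finitely many of them, $0<t_1<\cdots<t_m$, with $t_1<\varepsilon/2$, with $t_{j+1}-t_j<\varepsilon/2$ for each $j$, and with $t_m>\varepsilon^{-1}$. Since there are finitely many $t_j$, each a continuity point of $F$, weak convergence yields $N$ with $|F_n(t_j)-F(t_j)|<\varepsilon$ for all $j$ and all $n\ge N$. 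A short case analysis on this grid shows that every $t\in(0,\varepsilon^{-1})$ admits an index $j$ with $t<t_j<t+\varepsilon$; then, for $n\ge N$, monotonicity of $F_n$ and of $F$ together with $|F_n(t_j)-F(t_j)|<\varepsilon$ gives
\[ F_n(t)\le F_n(t_j)< F(t_j)+\varepsilon\le F(t+\varepsilon)+\varepsilon , \]
and, symmetrically, $F(t)\le F_n(t+\varepsilon)+\varepsilon$; these are precisely $A(F,F_n;\varepsilon)$ and $A(F_n,F;\varepsilon)$. As $\varepsilon>0$ was arbitrary, $d_L(F_n,F)\to 0$.

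The only genuinely delicate point is the converse: one must extract the finite grid from the (merely dense) set of continuity points of $F$, with mesh below $\varepsilon/2$ and reaching past $\varepsilon^{-1}$, so that each $t$ in the relevant range gets trapped strictly between $t$ and $t+\varepsilon$ by a grid point; everything after that is routine bookkeeping with monotonicity and the triangle-type inequalities built into the definition of $A$. A shorter, less computational route to the converse, once the easy direction is available, is an argument by contradiction: if $d_L(F_n,F)\not\to 0$, pass to a subsequence with $d_L(F_{n_k},F)\ge\delta>0$; by compactness of $(\Delta^+,d_L)$ (Lemma \ref{DS1}) a further subsequence converges in $d_L$ to some $G$ with $d_L(G,F)\ge\delta$, hence $G\ne F$; by the easy direction that subsequence then converges weakly to $G$, while it also converges weakly to $F$, and weak limits in $\Delta^+$ are unique since two nondecreasing left-continuous functions agreeing on a dense set coincide -- a contradiction.
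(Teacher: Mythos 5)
Your proof is correct. Note, however, that the paper does not prove this lemma at all: it is quoted verbatim from D.~Sibley's paper (cited as \cite[Theorems 1 and 2]{DS}), so there is no internal argument to compare against. What you have written is essentially the standard proof that the (modified) L\'evy metric metrizes weak convergence, specialized to $\Delta^+$: the forward direction by sandwiching $F_n(t)$ between $F(t-\eta)-\eta$ and $F(t+\eta)+\eta$ at continuity points, and the converse by a finite grid of continuity points of $F$ with mesh below $\varepsilon/2$ reaching past $\varepsilon^{-1}$ --- all the quantifier bookkeeping ($t-h\in(0,h^{-1})$, the trapping $t<t_j<t+\varepsilon$, the matching of the inequalities with the definitions of $A(F,F_n;\varepsilon)$ and $A(F_n,F;\varepsilon)$) checks out. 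Your alternative compactness route for the converse is also sound and is not circular, since Lemma \ref{DS1} is an independent result; its one nontrivial ingredient, uniqueness of weak limits in $\Delta^+$, is correctly justified by agreement on the co-countable (hence dense) set of common continuity points plus left-continuity.
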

\begin{definition}
Let $(G,D,\star)$ be a probabilistic metric space. For $x \in G$ and $t > 0$, the strong $t$-neighborhood
of $x$ is the set
$$N_x(t) = \lbrace y \in G : D(x,y)(t) > 1 - t \rbrace,$$
and the strong neighborhood system for $G$ is $\lbrace N_x(t); x\in G, t > 0 \rbrace.$
\end{definition}

\begin{lemma} (\cite{S.S}) Let $t > 0$ and $x, y \in G$. Then we have
$ y\in N_x(t)$ if and only if $d_L(D(x,y), \mathcal{H}_0) < t$.
\end{lemma}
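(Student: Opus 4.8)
Writing $F:=D(x,y)\in\Delta^+$, the claim is equivalent to the purely analytic statement that, for any $F\in\Delta^+$ and any $t>0$, one has $F(t)>1-t$ if and only if $d_L(F,\mathcal{H}_0)<t$. The plan is to unwind the two conditions $A(F,\mathcal{H}_0;h)$ and $A(\mathcal{H}_0,F;h)$ that enter the definition of $d_L(F,\mathcal{H}_0)$, and then to convert the infimum over $h$ into this elementary inequality on $F$, using only that $F$ is nondecreasing and left-continuous. The case $t>1$ is immediate: then $1-t<0\le F(t)$, so $F(t)>1-t$ holds automatically, while $d_L(F,\mathcal{H}_0)\le 1<t$ always (as recalled before the lemma); thus both statements hold, and from now on we may assume $0<t\le 1$.

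First I would record two elementary facts, valid for every $h\in(0,1]$. \emph{(a)} The condition $A(\mathcal{H}_0,F;h)$ always holds: for $s\in(0,1/h)$ we have $s+h>0$, hence $\mathcal{H}_0(s+h)+h=1+h\ge 1\ge F(s)\ge 0$. \emph{(b)} If $F(h)>1-h$, then $A(F,\mathcal{H}_0;h)$ holds, since for every $s\in(0,1/h)$ monotonicity gives $F(s+h)\ge F(h)>1-h$, i.e. $F(s+h)+h>1=\mathcal{H}_0(s)$. Consequently, as soon as $F(h)>1-h$ for some $h\in(0,1]$, we obtain $d_L(F,\mathcal{H}_0)\le h$.

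To prove that $F(t)>1-t$ implies $d_L(F,\mathcal{H}_0)<t$, introduce $g(r):=F(r)+r-1$, which is nondecreasing (a sum of a nondecreasing and a strictly increasing function) and left-continuous, with $g(t)>0$. By left-continuity of $g$ at $t$ there is $h$ with $0<h<t$ and $g(h)>0$, i.e. $F(h)>1-h$; since $h\in(0,t)\subseteq(0,1]$, fact \emph{(b)} yields $d_L(F,\mathcal{H}_0)\le h<t$. For the converse, if $d_L(F,\mathcal{H}_0)<t$ then, by definition of the infimum (recalling from \emph{(a)} that the companion condition $A(\mathcal{H}_0,F;h)$ is automatic), there exists $h\in(0,1]$ with $h<t$ for which $A(F,\mathcal{H}_0;h)$ holds, i.e. $F(s+h)\ge 1-h$ for all $s\in(0,1/h)$. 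Taking $s_0:=\tfrac12\min\{t-h,\,1/h\}\in(0,1/h)$ one checks $s_0+h\le t$, whence by monotonicity $F(t)\ge F(s_0+h)\ge 1-h>1-t$.

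I do not anticipate a genuine difficulty in this argument. The only points that require attention are the interplay between the strict inequality ``$>$'' defining $N_x(t)$ and the non-strict inequalities ``$\ge$'' appearing in $A(\cdot,\cdot;h)$ --- which is handled by using left-continuity of $g$ to pass from $F(t)>1-t$ to $F(h)>1-h$ for some $h$ slightly below $t$ --- together with the boundary cases $t\ge 1$ and $h$ near the endpoints of $(0,1]$, all of which are routine. The lemma itself is then just the analytic equivalence applied to $F=D(x,y)$, since $y\in N_x(t)$ means precisely $D(x,y)(t)>1-t$.
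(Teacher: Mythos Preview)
The paper does not supply its own proof of this lemma; it merely cites \cite{S.S}. Your argument is correct and self-contained: reducing to the analytic statement about $F=D(x,y)$, disposing of $t>1$ trivially, observing that $A(\mathcal{H}_0,F;h)$ is automatic, and then using left-continuity of $r\mapsto F(r)+r-1$ for the forward implication and a direct choice of $s_0\in(0,1/h)$ for the converse, is exactly the standard route. There is nothing to compare against and nothing to fix.
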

\begin{definition} A complete probabilistic metric space $(K,D,\star)$ is called compact if for all $t>0$, the
open cover $\lbrace N_x(t): x\in K \rbrace$ has a finite subcover.
\end{definition}
\begin{proposition} \textnormal{ (\cite[Theorem 2.2, Theorem 2.3]{MON})}  Let $(K,D,\star)$  be a complete probabilistic metric space. Then, we have:

$(1)$  $(K,D,\star)$  is compact if and only if every sequence has a convergent subsequence.

$(2)$ If $(K,D,\star)$ is compact, then  $(K,D,\star)$  is separable. 

\end{proposition}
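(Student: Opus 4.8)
The plan is to metrize the strong topology of $(K,D,\star)$ and then read off the two assertions from classical metric‑space theory. Concretely, I would introduce the function $\rho:K\times K\to[0,1]$ given by $\rho(x,y):=d_L(D(x,y),\mathcal{H}_0)$ and argue that it is a metric whose induced topology is the strong topology. Symmetry of $\rho$ is immediate from axiom (ii) of Definition \ref{definition.M}; and $\rho(x,y)=0$ forces $D(x,y)=\mathcal{H}_0$ (since $d_L$ is a metric on $\Delta^+$ by Lemma \ref{DS1}), hence $x=y$ by axiom (i). For the triangle inequality I would combine the monotonicity of $F\mapsto d_L(F,\mathcal{H}_0)$ recalled above with axiom (iii), $D(x,y)\star D(y,z)\leq D(x,z)$, to get $\rho(x,z)\leq d_L\!\left(D(x,y)\star D(y,z),\mathcal{H}_0\right)$, and then the compatibility of the modified L\'evy metric with the triangle function, in the form $d_L(F\star G,\mathcal{H}_0)\leq d_L(F,\mathcal{H}_0)+d_L(G,\mathcal{H}_0)$, to conclude $\rho(x,z)\leq\rho(x,y)+\rho(y,z)$. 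Finally, by the Lemma asserting that $y\in N_x(t)$ if and only if $d_L(D(x,y),\mathcal{H}_0)<t$, one has $N_x(t)=\{y\in K:\rho(x,y)<t\}$, so the strong neighbourhoods are exactly the open $\rho$-balls, and the strong topology is the metric topology of $\rho$.

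Next I would transcribe the probabilistic vocabulary into $\rho$. By Lemma \ref{DS2}, a sequence $(z_n)$ is a probabilistic Cauchy sequence exactly when it is $\rho$-Cauchy, and it converges probabilistically to $z$ exactly when $\rho(z_n,z)\to 0$; hence $(K,D,\star)$ is complete as a probabilistic metric space if and only if $(K,\rho)$ is a complete metric space. Moreover, the defining clause in the notion of ``compact'' used here --- that for every $t>0$ the cover $\{N_x(t):x\in K\}$ admits a finite subcover --- says precisely that $(K,\rho)$ is totally bounded. So ``$(K,D,\star)$ is compact'' means exactly ``$(K,\rho)$ is complete and totally bounded.''

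With this dictionary in hand, both statements reduce to standard facts about $(K,\rho)$. For $(1)$: a metric space is complete and totally bounded if and only if every sequence admits a convergent subsequence (the Bolzano--Weierstrass characterisation of compact metric spaces), and by the previous paragraph convergence of sequences means the same thing on both sides, which yields the equivalence. For $(2)$: a compact metric space is separable --- for each $n$ pick a finite $1/n$-net and take the union of these nets over $n$ --- and $\rho$-density is density for the strong topology. I expect the only genuinely nontrivial ingredient to be the triangle inequality for $\rho$ --- equivalently, the compatibility inequality $d_L(F\star G,\mathcal{H}_0)\leq d_L(F,\mathcal{H}_0)+d_L(G,\mathcal{H}_0)$ for the triangle functions under consideration, which is the classical reason the strong topology is metrizable; granting it, everything else is routine bookkeeping with well-known metric-space facts, and this metrizability is what the quoted results of \cite{MON} rest upon.
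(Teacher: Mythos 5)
The paper offers no proof of this proposition (it is quoted from \cite{MON}), so your argument can only be judged on its own terms. Its overall shape is right --- translate ``compact'' into ``complete and totally bounded'', then invoke the Bolzano--Weierstrass characterisation and the finite-net construction of a countable dense set --- and you correctly isolate the crux; but the crux is false as stated. The inequality $d_L(F\star G,\mathcal{H}_0)\leq d_L(F,\mathcal{H}_0)+d_L(G,\mathcal{H}_0)$ is \emph{not} a consequence of the axioms of a triangle function, and it fails even for continuous, sup-continuous ones of the form (\ref{eq0}): take $T(x,y)=\varphi^{-1}\bigl(\max(\varphi(x)+\varphi(y)-1,0)\bigr)$ with $\varphi(x)=x^{7}$ (a continuous t-norm, being isomorphic to $\max(x+y-1,0)$), and let $F\in\Delta^+$ be $0$ on $[-\infty,\varepsilon]$ and $0.9$ on $(\varepsilon,+\infty)$. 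Since $2(0.9)^{7}<1$ one gets $T(0.9,0.9)=0$, hence $F\star_T F=\mathcal{H}_\infty$ and $d_L(F\star_T F,\mathcal{H}_0)=1$, whereas $d_L(F,\mathcal{H}_0)+d_L(F,\mathcal{H}_0)=0.2$ for small $\varepsilon$. Realizing this on the three-point space $D(x,y)=D(y,z)=F$, $D(x,z)=\mathcal{H}_\infty$ (which satisfies Definition \ref{definition.M}) shows that your $\rho$ genuinely violates the triangle inequality, so the metrization on which everything else rests collapses. Roughly speaking, your subadditivity holds only for triangle functions dominating the one built from $\max(x+y-1,0)$, which is an extra hypothesis, not a theorem.

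The statement can still be proved along your lines, but with a weaker and correctly justified ingredient: one does not need $\rho$ to be a metric, only the uniformity estimate that for every $t>0$ there is $s>0$ such that $\rho(x,y)<s$ and $\rho(y,z)<s$ imply $\rho(x,z)<t$. This follows from the monotonicity of $F\mapsto d_L(F,\mathcal{H}_0)$ together with the continuity of $\star$ at $(\mathcal{H}_0,\mathcal{H}_0)$ (the same mechanism as in Lemma \ref{equi}), and it turns the sets $\lbrace (x,y):\rho(x,y)<t\rbrace$ into a uniformity with a countable base, whence the strong topology is metrizable by a metric uniformly equivalent to $\rho$; your dictionary (Cauchy, complete, totally bounded) and the two classical facts then go through. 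Be aware that this continuity of $\star$ is needed even for what you call routine bookkeeping --- both the extraction of a Cauchy subsequence from a totally bounded sequence and the argument that a $t$-separated sequence has no convergent subsequence use exactly that estimate --- and it does not appear among the hypotheses of the statement as you (or the proposition) record it, though it is assumed in \cite{MON} and in the paper's main theorem. So the proposal has a genuine gap at its foundational step, and the repair requires both a different metrization argument and an explicit continuity hypothesis on $\star$.
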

\section{\bf Some Properties of Probabilistic continuous and 1-Lipschitz map}\label{S2}
We recall from \cite{Ba} the notion of probabilistic Lipschitz maps and probabilistic continuous functions defined from a probabilistic metric space into $\Delta^+$. 
\begin{definition} (\cite{Ba}) Let $(G, D, \star)$ be a probabilistic metric space and let $f$ be a function $f : G \longrightarrow \Delta^+$. 

$(1)$ We say that $f$ is continuous at $z\in G$ if $f(z_n)\,{\xrightarrow {\textnormal{w}}}\, f(z)$, when $D(z_n,z)\,{\xrightarrow {\textnormal{w}}}\,\mathcal{H}_0$. We say that $f$ is continuous if $f$ is continuous at each point $z\in G$.

$(2)$ We say that $f$ is a probabilistic $1$-Lipschitz map if: $$\forall x, y \in G,\hspace{1mm} D(x,y)\star f(y)\leq f(x).$$
\end{definition}
\vskip5mm
We can also define $k$-Lipschitz maps for any nonegative real number $k\geq 0$ as the maps $f$ satisfying 
 $$\forall x, y \in G,\hspace{1mm} D_k(x,y)\star f(y)\leq f(x),$$
where, for all $x, y \in G$ and all $t\in \R$, $D_k(x,y)(t)=D(x,y)(\frac{t}{k})$ if $k>0$ and $D_0(x,y)(t)=\mathcal{H}_0(t)$ if $k=0$. For sake of simplicity, we shall only treat in this paper the case of $1$-Lipschitz maps, but our main result result could be easily extended to this more general setting.
\begin{example} \label{example.2} Let $(G,d)$ be a metric space. Assume that $\star$ is a triangle function on $\Delta^+$ satisfying $\mathcal{H}_a\star \mathcal{H}_b=\mathcal{H}_{a+b}$ for all $a, b\in \R^+$ (for example if $\star=\star_T$ where $T$ is a lef-continuous triangular norm). Let $(G, D, \star)$ be the probabilistic metric space defined with the probabilistic metric 
$$D(p,q)=\mathcal{H}_{d(p,q)}.$$
Let $L: (G,d) \longrightarrow \R^+$ be a real-valued map. Then, $L$ is a non-negative $1$-Lipschitz map if and only if $f : (G,D,\star) \longrightarrow \Delta^+$ defined for all $x\in G$ by
$$f(x):=\mathcal{H}_{L(x)}$$
is a probabilistic $1$-Lipschitz map. This example shows that the framework of probabilistic $1$-Lipschitz maps encompasses the classical determinist case.

\end{example}

By $C_\star(G,\Delta^+)$ we denote the space of all (probabilistic) continuous functions $f : G\longrightarrow \Delta^+$. We equip the space $C_\star(G,\Delta^+)$ with the following metric 
\begin{eqnarray*}
d_{\infty}(f,g):=\sup_{x\in G} d_L(f(x), g(x))
\end{eqnarray*}
where $d_L$ denotes the modified L\'evy distance on $\Delta^+$. 
\vskip5mm
By $Lip^1_\star(G,\Delta^+)$ we denote the space of all probabilistic $1$-Lipschitz maps   

$$Lip^1_\star(G,\Delta^+):=\lbrace f : G\longrightarrow \Delta^+/ D(x,y)\star f(y)\leq f(x); \forall x, y \in G\rbrace.$$
For all $x\in G$, by $\delta_x$ we denote the map 
\begin{eqnarray*}
\delta_x : G &\longrightarrow& \Delta^+\\
            y&\mapsto& D(y,x).
\end{eqnarray*}

We set $\mathcal{G}(G):=\lbrace \delta_x/ x\in G \rbrace$ and by $\delta$, we denote the operator  
\begin{eqnarray*}
\delta : G &\longrightarrow& \mathcal{G}(G)\subset Lip^1_\star(G,\Delta^+)\\
            x &\mapsto& \delta_x.
\end{eqnarray*}
\vskip5mm
The following proposition gives a canonical way to build probabilistic Lipschitz maps.
\begin{proposition} Let $(G, D, \star)$ be a probabilistic metric space such that $\star$ is sup-continuous. Let $f : (G,D,\star) \longrightarrow \Delta^+$ be any map and $A$ be any no-empty subset of $G$. Then, the map $\tilde{f}_A(x):=\sup_{y\in A} [ f(y) \star D(x,y)]$, for all $x\in G$ is a probabilistic $1$-Lipschitz map and we have $\tilde{f}_A(x)\geq f(x)$, for all $x\in A$.
\end{proposition}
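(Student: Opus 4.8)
The plan is to verify the three requirements in turn: that $\tilde{f}_A$ takes values in $\Delta^+$, that it dominates $f$ on $A$, and that it is probabilistic $1$-Lipschitz. The first point is immediate: for each fixed $x\in G$, the family $\{f(y)\star D(x,y) : y\in A\}$ is a nonempty subfamily of $\Delta^+$ (nonempty because $A\neq\emptyset$), and since $(\Delta^+,\leq)$ is a complete lattice, its supremum $\tilde{f}_A(x)$ again lies in $\Delta^+$.

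For the domination on $A$, fix $x \in A$. Since $x$ itself is an admissible index in the supremum, $\tilde{f}_A(x) \geq f(x) \star D(x,x)$. By axiom (i) of Definition \ref{definition.M} we have $D(x,x) = \mathcal{H}_0$, and by axiom (iv) of Definition \ref{axiom1} the element $\mathcal{H}_0$ is neutral for $\star$, so $f(x)\star D(x,x) = f(x)$. Hence $\tilde{f}_A(x) \geq f(x)$.

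The core of the argument is the Lipschitz inequality. Fix $x, x' \in G$; the goal is $D(x,x')\star \tilde{f}_A(x') \leq \tilde{f}_A(x)$. Starting from the definition, $D(x,x')\star\tilde{f}_A(x') = D(x,x')\star \sup_{y\in A}\bigl(f(y)\star D(x',y)\bigr)$. Using commutativity of $\star$ together with the sup-continuity hypothesis, one pulls the triangle function inside the supremum, so this equals $\sup_{y\in A}\bigl(D(x,x')\star f(y)\star D(x',y)\bigr)$, associativity making the triple product unambiguous. Now for each fixed $y$, reassociate as $f(y)\star\bigl(D(x,x')\star D(x',y)\bigr)$; the triangle inequality (axiom (iii) of Definition \ref{definition.M}) gives $D(x,x')\star D(x',y) \leq D(x,y)$, and monotonicity of $\star$ (axiom (v) of Definition \ref{axiom1}) then yields $f(y)\star\bigl(D(x,x')\star D(x',y)\bigr) \leq f(y)\star D(x,y)$. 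Taking the supremum over $y\in A$ on both sides, the right-hand side is exactly $\tilde{f}_A(x)$, which gives $D(x,x')\star\tilde{f}_A(x') \leq \tilde{f}_A(x)$, as required.

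No step presents a genuine obstacle; the only point demanding care is the invocation of sup-continuity, which is precisely the hypothesis allowing the interchange of $\star$ with an arbitrary (possibly infinite) supremum over $A$. I would also double-check that commutativity is genuinely needed to match the form of the sup-continuity axiom (which fixes the second argument of $\tau$), and that associativity is used twice: once to read the triple $\star$-product unambiguously and once to regroup before applying the triangle inequality.
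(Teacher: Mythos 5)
Your proof is correct and follows essentially the same route as the paper's: the domination on $A$ comes from taking $y=x$ in the supremum together with $D(x,x)=\mathcal{H}_0$ being neutral, and the $1$-Lipschitz property comes from the triangle inequality plus monotonicity applied termwise, with sup-continuity used to commute $\star$ with the supremum (the paper applies it at the end to pull $D(y,x)$ out of the sup, you apply it at the start to push $D(x,x')$ in — the same step read in opposite directions). Your explicit remarks on where commutativity and associativity enter, and the check that $\tilde{f}_A(x)\in\Delta^+$ via the complete lattice structure of $(\Delta^+,\leq)$, are details the paper leaves implicit but are entirely consistent with it.
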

\begin{proof} The proof is similar to the standard inf-convolution construction. The fact that $\tilde{f}_A(x)\geq f(x)$ for all $x\in A$ is immediate from the definition of $\tilde{f}_A$. Let us now prove that it is probabilistic $1$-Lipschiptz. Let $x$, $y\in G$. Then, for all $z\in A$, we have
	\begin{multline*}
		\tilde{f}_A(y) = \sup_{z\in A}\left[f(z)\star D(y,z)\right]\geq f(z)\star D(y,z)\\
		\geq f(z)\star \left(D(y,x)\star D(x,z)\right)=\left(f(z)\star D(x,z)\right)\star D(y,x).
	\end{multline*}
	We get the conclusion by taking the supremum with respect to $z\in A$.
	\end{proof}
%
%
\begin{proposition} (\cite[Proposition 3.6]{Ba}) \label{cont} Let $(G,D,\star)$ be a probabilistic metric space such that $\star$ is continuous. Then, every probabilistic $1$-Lipschitz map defined on $G$ is continuous. In other words, we  have that $Lip^1_\star(G,\Delta^+)\subset C_\star(G,\Delta^+)$.
\end{proposition}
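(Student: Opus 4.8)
Proposal for proving Proposition \ref{cont} (every probabilistic $1$-Lipschitz map is continuous when $\star$ is continuous).

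The plan is to fix $f \in Lip^1_\star(G,\Delta^+)$ and a point $z \in G$, take a sequence $(z_n)$ with $D(z_n,z) \xrightarrow{\textnormal{w}} \mathcal{H}_0$, and show $f(z_n) \xrightarrow{\textnormal{w}} f(z)$. The idea is to trap $f(z_n)$ between two quantities that both converge weakly to $f(z)$. From the $1$-Lipschitz inequality applied to the pair $(z_n, z)$ we get $D(z_n,z) \star f(z) \leq f(z_n)$, and applied to the pair $(z, z_n)$ we get $D(z,z_n) \star f(z_n) \leq f(z)$. Using symmetry $D(z_n,z) = D(z,z_n)$, these read
\begin{equation*}
D(z_n,z) \star f(z) \leq f(z_n) \quad\text{and}\quad D(z_n,z) \star f(z_n) \leq f(z).
\end{equation*}

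Next I would pass to the limit using continuity of $\star$. Since $D(z_n,z) \xrightarrow{\textnormal{w}} \mathcal{H}_0$ and $f(z) \xrightarrow{\textnormal{w}} f(z)$ (a constant sequence), continuity of $\star$ gives $D(z_n,z) \star f(z) \xrightarrow{\textnormal{w}} \mathcal{H}_0 \star f(z) = f(z)$, using axiom (iv) that $\mathcal{H}_0$ is the neutral element. So the left-hand quantity in the first inequality tends to $f(z)$. This already pins $f(z_n)$ from below in the limit. The remaining task is to get a matching upper control. Here the second inequality is the right tool, but it involves $f(z_n)$ on both sides implicitly (the term $D(z_n,z) \star f(z_n)$), so I would argue by subsequences: it suffices to show that every subsequence of $(f(z_n))$ has a further subsequence converging weakly to $f(z)$. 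By Lemma \ref{DS1}, $(\Delta^+, d_L)$ is compact, so any subsequence of $(f(z_n))$ has a further subsequence $(f(z_{n_k}))$ with $f(z_{n_k}) \xrightarrow{\textnormal{w}} H$ for some $H \in \Delta^+$ (invoking Lemma \ref{DS2} to identify $d_L$-convergence with weak convergence). Applying continuity of $\star$ to the second inequality along this subsequence yields $D(z_{n_k},z) \star f(z_{n_k}) \xrightarrow{\textnormal{w}} \mathcal{H}_0 \star H = H$, while the right-hand side is the constant $f(z)$; since weak limits respect the order $\leq$ at common continuity points (and both sides agree with genuine distribution functions), we get $H \leq f(z)$. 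Combined with the first inequality, which along the subsequence gives $f(z) = \lim D(z_{n_k},z)\star f(z) \leq \lim f(z_{n_k}) = H$, we conclude $H = f(z)$. Since the limit is forced to be $f(z)$ regardless of the subsequence, the whole sequence $f(z_n)$ converges weakly to $f(z)$.

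The main obstacle is the bookkeeping around passing inequalities to the weak limit: weak convergence only controls values at continuity points of the limit, so to deduce $H \le f(z)$ cleanly from $D(z_{n_k},z)\star f(z_{n_k}) \le f(z)$ one must be slightly careful — evaluate at points that are continuity points of all the relevant limits (the set of discontinuities of any $F \in \Delta^+$ is at most countable, so such points are dense), take limits there, and then use left-continuity of distribution functions to extend the inequality to all $t$. The other point to handle with care is the circularity in the second inequality, which the subsequence extraction via compactness of $(\Delta^+, d_L)$ is designed to resolve; once that device is in place, continuity of $\star$ and the neutral-element axiom do the rest.
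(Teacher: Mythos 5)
Your argument is correct. The paper itself does not reprove this proposition --- it simply cites \cite[Proposition 3.6]{Ba} --- so the fair comparison is with the ingredients the paper does supply, and there your route differs in an interesting way from the one the paper implicitly leans on. You prove continuity by a sandwich-plus-compactness argument: the two Lipschitz inequalities $D(z_n,z)\star f(z)\le f(z_n)$ and $D(z_n,z)\star f(z_n)\le f(z)$, compactness of $(\Delta^+,d_L)$ (Lemma \ref{DS1}) to extract a weak limit $H$ of an arbitrary subsequence, and passage of $\le$ to the weak limit to force $H=f(z)$. That last step is exactly Proposition \ref{util} of the paper (with one factor constant equal to $\mathcal{H}_0$), which you in effect re-derive by hand via continuity points and left-continuity; you could simply invoke it and shorten the final paragraph. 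By contrast, the paper's Lemma \ref{equi} obtains a strictly stronger conclusion --- uniform equicontinuity of the whole class $Lip^1_\star(G,\Delta^+)$ --- by a quantitative estimate, $d_L(f(x),f(y))\le\max\bigl[d_L(D(x,y)\star f(x),f(x)),\,d_L(D(x,y)\star f(y),f(y))\bigr]$, combined with uniform continuity of $\star$ on the compact $\Delta^+\times\Delta^+$; Proposition \ref{cont} follows from that as a special case. Your approach buys a self-contained, purely sequential proof needing only continuity (not uniform continuity) of $\star$ at the relevant points and no explicit manipulation of the L\'evy distance; the paper's quantitative route buys the uniform modulus $\eta(\varepsilon)$ that is actually needed later in Lemma \ref{unif} and Theorem \ref{AA}. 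The only point to keep an eye on is the one you already flag: inequalities pass to weak limits only after the continuity-point/left-continuity argument, and your handling of it is sound.
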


\begin{proposition} \label{complete} Let $(G, D, \star)$ be a probabilistic metric space. Then, the space $(C_\star(G,\Delta^+), d_{\infty})$ is a complete metric space.
\end{proposition}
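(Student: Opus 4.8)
The plan is to prove completeness of $(C_\star(G,\Delta^+), d_\infty)$ by the standard argument for spaces of continuous functions valued in a complete metric space, using that $(\Delta^+, d_L)$ is a complete (indeed compact) metric space by Lemma \ref{DS1}. Let $(f_n)$ be a Cauchy sequence in $(C_\star(G,\Delta^+), d_\infty)$. First I would observe that for each fixed $x\in G$, the sequence $(f_n(x))$ is Cauchy in $(\Delta^+, d_L)$, since $d_L(f_n(x), f_m(x)) \leq d_\infty(f_n, f_m)$; by compactness of $(\Delta^+, d_L)$ this sequence converges to some element of $\Delta^+$, which I call $f(x)$. This defines a map $f : G \longrightarrow \Delta^+$.

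Next I would upgrade the pointwise convergence to uniform convergence. Given $\ve > 0$, pick $N$ so that $d_\infty(f_n, f_m) < \ve$ for all $n, m \geq N$; then for every $x$ and all such $n,m$ we have $d_L(f_n(x), f_m(x)) < \ve$, and letting $m \to \infty$ (using continuity of $d_L(\cdot, f(x))$, i.e. that $d_L$ is a metric) gives $d_L(f_n(x), f(x)) \leq \ve$ for all $x$ and all $n \geq N$. Hence $\sup_{x\in G} d_L(f_n(x), f(x)) \leq \ve$, which is exactly $d_\infty(f_n, f) \to 0$ once we know $f \in C_\star(G,\Delta^+)$.

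It remains to check that $f$ is continuous, i.e.\ $f \in C_\star(G,\Delta^+)$. This is the usual ``uniform limit of continuous functions is continuous'' step, which here amounts to the following: fix $z \in G$ and a sequence $(z_k)$ with $D(z_k, z) \xrightarrow{\textnormal{w}} \mathcal{H}_0$. By Lemma \ref{DS2} this is the same as $d_L(D(z_k,z), \mathcal{H}_0) \to 0$, and I must show $d_L(f(z_k), f(z)) \to 0$. A standard $3\ve$ argument does it: choose $n$ with $d_\infty(f_n, f) < \ve$, then use continuity of $f_n$ at $z$ (again via Lemma \ref{DS2}) to get $d_L(f_n(z_k), f_n(z)) < \ve$ for $k$ large, and combine via the triangle inequality for $d_L$:
\[
d_L(f(z_k), f(z)) \leq d_L(f(z_k), f_n(z_k)) + d_L(f_n(z_k), f_n(z)) + d_L(f_n(z), f(z)) < 3\ve.
\]
Therefore $f$ is continuous, $f \in C_\star(G,\Delta^+)$, and $d_\infty(f_n, f) \to 0$, proving completeness.

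The only mild subtlety — and the step I would be most careful about — is making sure that the supremum defining $d_\infty$ is finite and that no issue arises from $G$ being infinite: this is automatic since $d_L \leq 1$ on $\Delta^+ \times \Delta^+$, so $d_\infty \leq 1$ throughout and all the ``$\sup$'' manipulations and passages to the limit in $m$ are legitimate uniformly in $x$. Beyond that, the proof is entirely routine, relying only on Lemma \ref{DS1}, Lemma \ref{DS2}, and the triangle inequality.
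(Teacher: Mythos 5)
Your proposal is correct and follows essentially the same route as the paper: pointwise limits via completeness (compactness) of $(\Delta^+, d_L)$, upgrading to uniform convergence from the Cauchy property, and a $3\varepsilon$ argument for continuity of the limit. If anything, you are slightly more explicit than the paper at the uniform-convergence step, which the paper dismisses as "easy to see."
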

\begin{proof} Let $(f_n)$ be a Cauchy sequence in $(C_\star(G,\Delta^+), d_{\infty})$. In particular, for each $x\in G$, $(f_n(x))$ is Cauchy in $(\Delta^+, d_L)$ which is compact (Lemma \ref{DS1}). Thus, there exists a function $f : G \longrightarrow \Delta^+$ such that the sequence $(f_n)$ pointwise converges to $f$ on $G$ (with respect to the metric $d_L$). It is easy to see that in fact $(f_n)$ uniformly converges to $f$, since it is Cauchy sequence in $(C_\star(G,\Delta^+), d_{\infty})$. We need to prove that $f$ is continuous on $G$. Let $x\in G$ and $(x_k)$ be a sequence such that $d_L(D(x_k,x), \mathcal{H}_0)\longrightarrow 0$, when $k\longrightarrow +\infty$. 
For all $\varepsilon >0$, there exists $N_\varepsilon\in \N$ such that 
\begin{eqnarray}\label{eqq1}
n \geq N_\varepsilon \Longrightarrow d_{\infty} (f_n, f):=\sup_{x\in G} d_L(f_n(x),f(x))\leq \varepsilon
\end{eqnarray}
Using the continuity of $f_{N_\varepsilon}$, we have that there exists $\eta(\varepsilon)>0$ such that
\begin{eqnarray}\label{eqq2}
d_L(D(x_k,x),\mathcal{H}_0) \leq \eta(\varepsilon) \Longrightarrow d_L(f_{N_\varepsilon}(x_k),f_{N_\varepsilon}(x)) \leq \varepsilon
\end{eqnarray}
Using (\ref{eqq1}) and (\ref{eqq2}), we have that  
\begin{eqnarray*}
d_L(f(x_k),f(x))&\leq& d_L(f(x_k),f_{N_\varepsilon}(x_k))+d_L(f_{N_\varepsilon}(x_k),f_{N_\varepsilon}(x))+ d_L(f_{N_\varepsilon}(x),f(x))\\
                &\leq& 3\varepsilon
\end{eqnarray*}
This shows that $f$ is continuous on $G$. Finally, we proved that every Cauchy sequence $(f_n)$ uniformly converges to a continuous function $f$. In other words, the space $(C_\star(G,\Delta^+), d_{\infty})$ is complete.
\end{proof}
\section{\bf The main result: Probabilistic Arzela-Ascoli theorem}\label{S3}
The following theorem is the main result of the paper. Its proof will be given after some intermediate lemmas.
\begin{theorem} \label{AA} Let $(K, D, \star)$ be a probabilistic complete metric space such that $\star$ is continuous and sup-continuous. Then, the following assertions are equivalent. 

$(1)$ $(K, D, \star)$ is compact,

$(2)$ the metric space $(Lip^1_\star(K,\Delta^+),d_{\infty})$ is compact (equivalently, $Lip^1_\star(K,\Delta^+)$ is a compact subset of $(C_\star(K,\Delta^+), d_{\infty})$).
\end{theorem}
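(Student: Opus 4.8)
The proof splits naturally into the two implications of the equivalence.

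For the implication $(2)\Rightarrow(1)$, I would argue as follows. The canonical map $\delta : K \to \mathcal G(K) \subset Lip^1_\star(K,\Delta^+)$ sends $x$ to $\delta_x = D(\cdot,x)$. The first thing to verify is that $\delta$ is an isometric embedding of $(K,D,\star)$ — suitably metrized, say via the metric $x,y \mapsto d_L(D(x,y),\mathcal H_0)$ that is compatible with the strong-neighborhood topology — into $(Lip^1_\star(K,\Delta^+),d_\infty)$, or at least a homeomorphism onto its image; the triangle inequality for $D$ and symmetry give $d_\infty(\delta_x,\delta_y) = \sup_{z} d_L(D(z,x),D(z,y))$, and I expect this to be comparable to $d_L(D(x,y),\mathcal H_0)$ (one direction is the triangle function inequality evaluated at $z=x$ or $z=y$, the other uses that $d_L(\cdot,\mathcal H_0)$ is non-increasing and $D(z,x)\star D(x,y)\le D(z,y)$). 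Then the key remaining point is that $\mathcal G(K)=\delta(K)$ is \emph{closed} in $Lip^1_\star(K,\Delta^+)$: if $\delta_{x_n}\to f$ uniformly, the sequence $(x_n)$ should be seen to be Cauchy in $K$ (from the comparison above), hence convergent by completeness to some $x_\infty$, and then $f=\delta_{x_\infty}$. Granting compactness of $Lip^1_\star(K,\Delta^+)$, a closed subset is compact, so $\delta(K)$ is compact, hence $K$ is compact.

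For the harder implication $(1)\Rightarrow(2)$: since $(Lip^1_\star(K,\Delta^+),d_\infty)$ sits inside the complete space $(C_\star(K,\Delta^+),d_\infty)$ (Proposition \ref{cont} and Proposition \ref{complete}), it suffices to show $Lip^1_\star(K,\Delta^+)$ is totally bounded and closed. Closedness: if $f_n\to f$ uniformly with each $f_n$ probabilistic $1$-Lipschitz, then for fixed $x,y$ we have $D(x,y)\star f_n(y)\le f_n(x)$; passing to the weak limit using continuity of $\star$ (and Lemma \ref{DS2} to translate $d_\infty$-convergence into weak convergence pointwise) gives $D(x,y)\star f(y)\le f(x)$, so $f\in Lip^1_\star(K,\Delta^+)$. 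Total boundedness is the crux, and this is where I expect the real work — a genuine Arzelà–Ascoli argument. The plan is: given $\varepsilon>0$, use compactness of $K$ to pick a finite $\varepsilon$-net $\{x_1,\dots,x_N\}$; use compactness of $(\Delta^+,d_L)$ (Lemma \ref{DS1}) to pick a finite $\varepsilon$-net $\{F_1,\dots,F_M\}$ of $\Delta^+$; then show that the finitely many ``types'' assigning to each $x_i$ one of the $F_j$'s determine $f\in Lip^1_\star$ up to $d_\infty$-error $O(\varepsilon)$. The equicontinuity needed here comes for free from the $1$-Lipschitz condition: $D(x,y)\star f(y)\le f(x)$ and $D(y,x)\star f(x)\le f(y)$ bound $d_L(f(x),f(y))$ in terms of $d_L(D(x,y),\mathcal H_0)$ (using monotonicity of $d_L(\cdot,\mathcal H_0)$ and $\mathcal H_0$ being the neutral element, so $\star$ is ``$1$-Lipschitz'' for $d_L$ in an appropriate sense), so two points that are strong-$\varepsilon$-close have $f$-values that are $d_L$-close uniformly over all $f\in Lip^1_\star$. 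Combining: any $f$ is within $O(\varepsilon)$ of a function determined by its values at the net points, and these values live in a finite net, so there are finitely many such functions up to $O(\varepsilon)$; that is total boundedness.

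**Main obstacle.** The delicate technical point is quantifying the probabilistic equicontinuity: I need a clean inequality of the form $d_L(f(x),f(y)) \le \phi\big(d_L(D(x,y),\mathcal H_0)\big)$ with $\phi(h)\to 0$ as $h\to 0$, valid uniformly over $f\in Lip^1_\star(K,\Delta^+)$, derived purely from $D(x,y)\star f(y)\le f(x)$. This requires controlling how $\star$ interacts with $d_L$ near the neutral element $\mathcal H_0$ — essentially that $G \mapsto F\star G$ does not move things far in $d_L$ when $F$ is $d_L$-close to $\mathcal H_0$; continuity of $\star$ gives this qualitatively but a uniform modulus may need the modified-Lévy estimates of Sibley or an explicit computation with the condition $A(F,G;h)$. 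A secondary subtlety is making precise the ``reconstruct $f$ from net values'' step: one uses the canonical $1$-Lipschitz extension $\tilde f_A(x) = \sup_{y\in A}[f(y)\star D(x,y)]$ from the proposition above with $A$ the finite net, together with the fact that for $f$ already $1$-Lipschitz one has control of $f(x) - \tilde f_A(x)$ on all of $K$ via equicontinuity — so that $f$ is $d_\infty$-approximated by $\tilde f_A$, which in turn depends only on the finite data $(f(x_i))_i$ up to a finite net in $\Delta^+$.
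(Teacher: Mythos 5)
Your proposal is correct in substance but takes a genuinely different route from the paper on the main implication $(1)\Rightarrow(2)$. The paper argues by sequential compactness: separability of $K$, a Tychonoff/diagonal extraction giving a subsequence of $(f_n)$ converging pointwise on a countable dense set, the extension theorem (Lemma \ref{MC}) to extend the limit to a $1$-Lipschitz map on all of $K$, an argument via Proposition \ref{util} that the subsequence converges pointwise everywhere to that extension, and finally Lemma \ref{unif} to upgrade to uniform convergence. You instead show $Lip^1_\star(K,\Delta^+)$ is closed and totally bounded in the complete space $(C_\star(K,\Delta^+),d_\infty)$ (Proposition \ref{complete}); closedness is exactly the paper's Lemma \ref{Lip}, and your finite-net ``type'' argument for total boundedness is sound --- indeed you do not even need the canonical extension $\tilde f_A$ (and hence not sup-continuity) for this step, since two Lipschitz maps agreeing up to $\varepsilon$ on an $\eta(\varepsilon)$-net are already $d_\infty$-close by equicontinuity. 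Both routes rest on the same technical heart, the uniform equicontinuity of $Lip^1_\star$, which you rightly flag as the main obstacle; the resolution is the paper's Lemma \ref{equi}: since $(\Delta^+,d_L)$ is compact (Lemma \ref{DS1}), a continuous $\star$ is \emph{uniformly} continuous on $\Delta^+\times\Delta^+$, giving a uniform $\eta(\varepsilon)$ with $d_L(D(x,y)\star F,F)<\varepsilon$ for all $F$, and an explicit computation with the conditions $A(\cdot,\cdot;h)$ then yields $d_L(f(x),f(y))\le\max\bigl[d_L(D(x,y)\star f(x),f(x)),\,d_L(D(x,y)\star f(y),f(y))\bigr]$, precisely the estimate you anticipated. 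For $(2)\Rightarrow(1)$ your argument is essentially the paper's: the paper simply extracts a uniformly convergent, hence Cauchy, subsequence of $(\delta_{x_n})$ and evaluates at the points themselves to see that $(x_{\varphi(n)})$ is Cauchy, then invokes completeness; your closed-image packaging additionally requires the forward continuity of $\delta$, whose proof is again Lemma \ref{equi} applied to the $1$-Lipschitz maps $\delta_z$, so the sequential version is lighter.
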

\begin{lemma} \label{equi} Let $(G, D, \star)$ be a probabilistic metric space such that $\star$ is continuous. Then, the set $Lip_\star^1(G,\Delta^+)$ is uniformly  equicontinuous. In other words:\\
$\forall \varepsilon>0, \exists \eta(\varepsilon) >0 : \forall f\in Lip_\star^1(G,\Delta^+), \forall x, y\in G;$
$$ d_L(D(x,y),\mathcal{H}_0) <\eta(\varepsilon) \Longrightarrow d_L(f(x),f(y))<\varepsilon.$$
\end{lemma}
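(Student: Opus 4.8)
The plan is to turn the pointwise continuity of $\star$ into a genuine modulus of continuity that is uniform over the second variable, by invoking compactness of $(\Delta^+,d_L)$, and then to push this uniformity onto $Lip_\star^1(G,\Delta^+)$ through the two inequalities that define the $1$-Lipschitz condition. Concretely, by Lemma~\ref{DS2} the hypothesis ``$\star$ is continuous'' says precisely that $\star$ is a continuous map from $\Delta^+\times\Delta^+$, equipped with the product metric of $d_L$ with itself, into $(\Delta^+,d_L)$. Since $(\Delta^+,d_L)$ is compact (Lemma~\ref{DS1}), so is this finite product, and a continuous map on a compact metric space is uniformly continuous. Specializing uniform continuity to pairs sharing the same second coordinate and recalling that $\mathcal{H}_0\star L=L$, I obtain: for every $\varepsilon>0$ there is $\eta(\varepsilon)>0$ such that
$$d_L(F,\mathcal{H}_0)<\eta(\varepsilon)\ \Longrightarrow\ d_L(F\star L,L)<\varepsilon\qquad\text{for every }F,L\in\Delta^+. $$

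Now fix $f\in Lip_\star^1(G,\Delta^+)$ and $x,y\in G$ with $d_L(D(x,y),\mathcal{H}_0)<\eta(\varepsilon)$, and write $F:=D(x,y)$. The displayed implication gives $d_L(F\star f(y),f(y))<\varepsilon$ and $d_L(F\star f(x),f(x))<\varepsilon$. I then work at the level of the conditions $A(\cdot,\cdot;h)$ used to define $d_L$. If $h>d_L(F\star f(y),f(y))$, then (after the routine check that $A(\cdot,\cdot;h)$ is monotone in $h$) one has $f(y)(t)\leq (F\star f(y))(t+h)+h$ for all $t\in(0,1/h)$; feeding in the $1$-Lipschitz inequality $F\star f(y)=D(x,y)\star f(y)\leq f(x)$ yields $f(y)(t)\leq f(x)(t+h)+h$, that is, $A(f(x),f(y);h)$ holds. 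Symmetrically, using $F\star f(x)=D(y,x)\star f(x)\leq f(y)$ (here $D(y,x)=D(x,y)$ by symmetry of $D$), the condition $A(f(y),f(x);h)$ holds whenever $h>d_L(F\star f(x),f(x))$. Hence both conditions hold for every $h$ exceeding $\max\{d_L(F\star f(y),f(y)),\,d_L(F\star f(x),f(x))\}$, so $d_L(f(x),f(y))$ is at most this maximum, which is $<\varepsilon$. As $\eta(\varepsilon)$ was chosen independently of $f$, $x$ and $y$, this is exactly the asserted uniform equicontinuity.

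I expect the one genuinely substantial point to be the first paragraph: the passage from ``$\star$ is continuous'' (a pointwise, sequential statement) to a single $\eta(\varepsilon)$ that works simultaneously for all $L\in\Delta^+$. This is precisely where the compactness of $(\Delta^+,d_L)$ is indispensable; everything afterwards is elementary manipulation of the modified L\'evy distance, with the only mild care being that $d_L$ is defined by an infimum, so the inequalities $A(\cdot,\cdot;h)$ are available only for $h$ strictly larger than the relevant distance and the final estimate is obtained by letting such $h$ decrease to that distance.
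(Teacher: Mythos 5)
Your proposal is correct and follows essentially the same route as the paper: compactness of $(\Delta^+,d_L)$ upgrades continuity of $\star$ to uniform continuity, yielding a modulus $\eta(\varepsilon)$ with $d_L(D(x,y),\mathcal{H}_0)<\eta(\varepsilon)\Rightarrow d_L(D(x,y)\star L,L)<\varepsilon$ uniformly in $L$, and then the two $1$-Lipschitz inequalities are fed into the conditions $A(\cdot,\cdot;h)$ to get $d_L(f(x),f(y))\leq\max\bigl(d_L(D(x,y)\star f(x),f(x)),\,d_L(D(x,y)\star f(y),f(y))\bigr)$. The only differences are presentational (you work with a single $h$ above the maximum rather than the paper's pair $h_1,h_2$, and you make the monotonicity of $A$ in $h$ explicit).
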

\begin{proof} Since $(\Delta^+,d_L)$ is a compact metric space (see Lemma \ref{DS1}) and $\star$ is continuous, then $\star$ is uniformly continuous from $\Delta^+\times \Delta^+$ into $\Delta^+$. It follows that \\
$\noindent \forall \varepsilon>0, \exists \eta(\varepsilon) >0 : \forall F\in \Delta^+, \forall x, y\in G:$
$$d_L(D(x,y),\mathcal{H}_0) <\eta(\varepsilon) \Longrightarrow d_L(D(x,y)\star F,F) <\varepsilon.$$
In particular, we have for all $f\in Lip_\star^1(G,\Delta^+)$ and all $x$, $y\in G$,
\begin{eqnarray} \label{eq2}
\max [d_L(D(x,y)\star f(x),f(x)),d_L(D(x,y)\star f(y),f(y))]<\varepsilon.
\end{eqnarray}
We are going to prove that 
\begin{eqnarray*}
d_L(f(x),f(y)) \leq \max [d_L(D(x,y)\star f(x),f(x)),d_L(D(x,y)\star f(y),f(y))].
\end{eqnarray*}
Let $h_1, h_2 > 0$ be such that  $A(D(x,y)\star f(x), f(x),h_1)$, $A(f(x), D(x,y)\star f(x),h_1)$, $A(D(x,y)\star f(y), f(y),h_2)$ and $A(f(y), D(x,y)\star f(y),h_2)$ hold, which means that for all $t\in (0,h_1^{-1})$ and all $t'\in (0,h_2^{-1})$ we have:
\begin{eqnarray*}
0\leq D(x,y)\star f(x)(t) &\leq & f(x)(t+h_1) + h_1 \\
0\leq f(x)(t) &\leq & D(x,y)\star f(x)(t+h_1) + h_1 \\
0\leq D(x,y)\star f(y)(t') &\leq & f(y)(t'+h_2) + h_2\\
0\leq f(y)(t') &\leq & D(x,y)\star f(y)(t'+h_2) + h_2 
\end{eqnarray*}
From the second, the fourth inequalities and the fact that $f$ is $1$-Lipschitz, we get that for all $t\in (0,h_1^{-1})$ and all $t'\in (0,h_2^{-1})$ we have:
\begin{eqnarray*}
0\leq f(x)(t) \leq  f(y)(t+h_1) + h_1 \\
0\leq f(y)(t') \leq  f(x)(t'+h_2) + h_2 
\end{eqnarray*} 
It follows that for all $s\in (0,\min(h_1^{-1},h_2^{-1}))$ we have:
\begin{eqnarray*}
0\leq f(x)(s) \leq  f(y)(s+ \max(h_1,h_2)) + \max(h_1,h_2) \\
0\leq f(y)(s) \leq  f(x)(s+ \max(h_1,h_2)) + \max(h_1,h_2) 
\end{eqnarray*}
Thus, we have that $d_L(f(x),f(y))\leq \max(h_1,h_2)$ for all $h_1, h_2 > 0$ such that  $A(D(x,y)\star f(x), f(x),h_1)$, $A(f(x), D(x,y)\star f(x),h_1)$, $A(D(x,y)\star f(y), f(y),h_2)$ and $A(f(y), D(x,y)\star f(y),h_2)$ hold. This implies that 
\begin{eqnarray*}
d_L(f(x),f(y)) \leq \max (d_L(D(x,y)\star f(x),f(x)),d_L(D(x,y)\star f(y),f(y))).
\end{eqnarray*}
Using the above inequality and (\ref{eq2}), we get the conclusion.
\end{proof}
We recall the following useful  proposition (see \cite{Ba}).
\begin{proposition} (\cite[Proposition 3.5]{Ba}) \label{util}
Let $(F_n), (L_n), (K_n) \subset (\Delta^+,\star)$. Suppose that 

$(a)$ the triangle function $\star$ is continuous,

$(b)$ $F_n\,{\xrightarrow {\textnormal{w}}}\, F$, $L_n\,{\xrightarrow {\textnormal{w}}}\, L$ and $K_n\,{\xrightarrow {\textnormal{w}}}\, K$. 

$(c)$ for all $n\in \N$, $F_n \star L_n \leq K_n$,

then, $F\star L\leq K$.
\end{proposition}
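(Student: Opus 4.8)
The plan is to combine the continuity of $\star$ with an elementary limiting argument for the order $\leq$ on $\Delta^+$. First I would invoke hypothesis $(a)$: since $F_n\,{\xrightarrow {\textnormal{w}}}\, F$ and $L_n\,{\xrightarrow {\textnormal{w}}}\, L$, the continuity of $\star$ at $(F,L)$ (Definition \ref{contin}$(2)$) gives directly $F_n\star L_n\,{\xrightarrow {\textnormal{w}}}\, F\star L$. Setting $G_n:=F_n\star L_n$ and $G:=F\star L\in\Delta^+$, hypothesis $(c)$ reads $G_n\leq K_n$ for all $n$, and the statement reduces to the following purely order-theoretic fact: if $G_n\leq K_n$ for all $n$ together with $G_n\,{\xrightarrow {\textnormal{w}}}\, G$ and $K_n\,{\xrightarrow {\textnormal{w}}}\, K$, then $G\leq K$.

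To prove this reduced claim I would first pass to the limit at points of continuity. Recall that weak convergence (Definition \ref{contin}$(1)$) means pointwise convergence at every continuity point of the limit. Since $G$ and $K$ are nondecreasing, each has at most countably many discontinuities, so the set $S$ of points that are simultaneously continuity points of $G$ and of $K$ is co-countable, hence dense in $\R$. For each $s\in S$ we have $G_n(s)\to G(s)$ and $K_n(s)\to K(s)$; passing to the limit in the inequality $G_n(s)\leq K_n(s)$ then yields $G(s)\leq K(s)$ for every $s\in S$.

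It remains to upgrade this to all of $\R$, which is where the left-continuity built into $\Delta^+$ is used. Fix $t\in\R$ and choose a sequence $(s_k)\subset S$ with $s_k\uparrow t$; this is possible because $S$ is dense. For each $k$, monotonicity of $K$ gives $G(s_k)\leq K(s_k)\leq K(t)$, and letting $k\to+\infty$ the left-continuity of $G$ gives $G(t)=\lim_k G(s_k)\leq K(t)$. As $t\in\R$ was arbitrary, we conclude $G\leq K$, that is, $F\star L\leq K$.

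The only genuine subtlety, and the step I expect to require the most care, is precisely this last extension: weak convergence provides no information at discontinuity points, so one cannot argue pointwise at an arbitrary $t$. The density of the common continuity set $S$, combined with the left-continuity and monotonicity that define $\Delta^+$, is exactly what bridges this gap; everything else is a direct application of the continuity of $\star$.
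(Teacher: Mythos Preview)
Your argument is correct. The reduction via continuity of $\star$ to the order-preservation statement is immediate, and your handling of the only real issue---that weak limits are only controlled at continuity points---is clean: the common continuity set $S$ of $G$ and $K$ is co-countable and hence meets every left half-neighbourhood $(t-\varepsilon,t)$, so approximating from the left through $S$ and invoking left-continuity of $G$ together with monotonicity of $K$ closes the gap exactly as you say.

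As for comparison with the paper: there is nothing to compare. The paper does not prove this proposition; it merely quotes it from \cite[Proposition~3.5]{Ba} and uses it as a black box. Your write-up therefore supplies a self-contained proof where the paper offers only a citation.
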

\begin{lemma} \label{Lip} Let $(G, D, \star)$ be a probabilistic metric space. Let $(f_n)$ be a sequence of $1$-Lipschitz maps and $L$ be a subset of $G$. Suppose that there exists a function $f$ defined on $L$ such that $f_n(x) \,{\xrightarrow {\textnormal{w}}}\, f(x)$, when $n \longrightarrow +\infty$, for all $x\in L$. Then, $f$ is $1$-Lipschitz on $L$.
\end{lemma}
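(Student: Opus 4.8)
The plan is to show that the $1$-Lipschitz inequality $D(x,y)\star f(y)\le f(x)$ passes to the pointwise weak limit, for any fixed pair $x,y\in L$. This is exactly the situation covered by Proposition \ref{util}: we have three sequences in $\Delta^+$, namely the constant sequence $D(x,y)$, the sequence $f_n(y)$, and the sequence $f_n(x)$, and we know the required weak convergences and the chain of inequalities.

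First I would fix $x,y\in L$. Since each $f_n$ is $1$-Lipschitz on $G$, in particular $D(x,y)\star f_n(y)\le f_n(x)$ for every $n\in\N$. Next, set $F_n:=D(x,y)$ (constant in $n$), $L_n:=f_n(y)$, and $K_n:=f_n(x)$. By hypothesis $L_n=f_n(y)\,{\xrightarrow{\textnormal{w}}}\,f(y)=:L$ and $K_n=f_n(x)\,{\xrightarrow{\textnormal{w}}}\,f(x)=:K$, while trivially $F_n\,{\xrightarrow{\textnormal{w}}}\,D(x,y)=:F$. Since $\star$ is assumed continuous, all three conditions $(a)$, $(b)$, $(c)$ of Proposition \ref{util} are satisfied, and we conclude $F\star L\le K$, i.e. $D(x,y)\star f(y)\le f(x)$. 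As $x,y\in L$ were arbitrary, $f$ is $1$-Lipschitz on $L$ (in the sense that the defining inequality holds for all pairs of points of $L$, using the restriction of $D$ to $L\times L$).

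The only subtlety — and the one place where I would be slightly careful — is that Lemma \ref{Lip} as stated does not explicitly assume $\star$ is continuous, whereas Proposition \ref{util} does. Either the continuity hypothesis is meant to be inherited from the ambient running assumptions (in the application inside Theorem \ref{AA}, $\star$ is indeed continuous), or one should add it to the statement. Assuming $\star$ continuous, there is no real obstacle: the entire argument is a one-line invocation of Proposition \ref{util} applied coordinatewise in $x$ and $y$. The ``hard part'' is purely bookkeeping — correctly identifying which sequence plays the role of $F_n$, $L_n$, $K_n$ — and there is nothing deeper to verify.
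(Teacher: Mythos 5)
Your proof is correct and is essentially identical to the paper's: both fix $x,y\in L$, note that $D(x,y)\star f_n(y)\le f_n(x)$ for all $n$, and pass to the limit via Proposition \ref{util} with the constant sequence $F_n=D(x,y)$. Your side remark is also apt — the paper's statement of the lemma likewise omits the continuity hypothesis on $\star$ that Proposition \ref{util} requires, though it holds in the only place the lemma is used (Theorem \ref{AA}).
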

\begin{proof} Since $f_n$ is $1$-Lipschitz map for each $n \in \N$, then we have, for all $x, y \in L$ and for all $n\in \N$:
\begin{eqnarray*}
D(x,y)\star f_n(x) \leq f_n(y)
\end{eqnarray*}
Using Proposition \ref{util}, we get that for all $x, y \in L$
\begin{eqnarray*}
D(x,y)\star f(x) \leq f(y)
\end{eqnarray*}
In other words, $f$ is $1$-Lipschitz maps on $L$.
\end{proof}
\begin{lemma} \label{unif} Let $(K, D, \star)$ be a probabilistic compact metric space and $(f_n)$ be a sequence of $1$-Lipschitz maps. Suppose that there exists a $1$-Lipschitz map $f$ such that $d_L(f_n(x),f(x))\longrightarrow 0$, when $n\longrightarrow +\infty$ for all $x\in K$. Then, $(f_n)$ converges uniformly to $f$, that is, $d_{\infty}(f_n,f)\longrightarrow 0$, when $n \longrightarrow +\infty$.
\end{lemma}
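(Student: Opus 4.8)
The plan is to run the classical Arzel\`a--Ascoli argument in this probabilistic setting: pointwise convergence together with uniform equicontinuity over a compact domain upgrades to uniform convergence. All three ingredients are already available. Equicontinuity of the \emph{whole} family $Lip^1_\star(K,\Delta^+)$ comes from Lemma \ref{equi}; crucially, both each $f_n$ and the limit $f$ belong to $Lip^1_\star(K,\Delta^+)$ (this is part of the hypothesis for $f$), so one and the same equicontinuity modulus controls all of them simultaneously; and the compactness of $(K,D,\star)$ provides finite subcovers by strong $\eta$-neighborhoods.

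First I would fix $\varepsilon>0$ and apply Lemma \ref{equi} to get $\eta=\eta(\varepsilon)>0$ such that, for every $g\in Lip^1_\star(K,\Delta^+)$ and all $x,y\in K$, the condition $d_L(D(x,y),\mathcal{H}_0)<\eta$ implies $d_L(g(x),g(y))<\varepsilon$. Next I would invoke compactness of $(K,D,\star)$: the open cover $\{N_x(\eta):x\in K\}$ admits a finite subcover $N_{x_1}(\eta),\dots,N_{x_m}(\eta)$, and by the lemma recalled in Section \ref{S1} (together with the symmetry of $D$), $x\in N_{x_i}(\eta)$ is equivalent to $d_L(D(x_i,x),\mathcal{H}_0)<\eta$. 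Then the pointwise convergence $d_L(f_n(x_i),f(x_i))\to 0$ at the finitely many centers $x_1,\dots,x_m$ yields an index $N$ such that $d_L(f_n(x_i),f(x_i))<\varepsilon$ for all $n\ge N$ and all $i\in\{1,\dots,m\}$.

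Finally, given an arbitrary $x\in K$, I would choose $i$ with $x\in N_{x_i}(\eta)$, hence $d_L(D(x_i,x),\mathcal{H}_0)<\eta$, and combine the triangle inequality for $d_L$ with the equicontinuity bound applied to $f_n$ and to $f$:
$$d_L(f_n(x),f(x))\le d_L(f_n(x),f_n(x_i))+d_L(f_n(x_i),f(x_i))+d_L(f(x_i),f(x))<3\varepsilon$$
for all $n\ge N$. Taking the supremum over $x\in K$ gives $d_{\infty}(f_n,f)\le 3\varepsilon$ for $n\ge N$, and since $\varepsilon>0$ was arbitrary, $d_{\infty}(f_n,f)\to 0$. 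I do not expect a serious obstacle in this proof; the one point that must be handled with care is that the modulus $\eta(\varepsilon)$ from Lemma \ref{equi} is genuinely uniform over $Lip^1_\star(K,\Delta^+)$, which is exactly what allows the same $\eta$ — and hence the same finite subcover — to control every $f_n$ and the limit $f$ at once.
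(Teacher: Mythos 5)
Your proof is correct and follows essentially the same argument as the paper: the uniform equicontinuity modulus from Lemma \ref{equi}, a finite subcover of $K$ by strong $\eta(\varepsilon)$-neighborhoods, pointwise convergence at the finitely many centers, and the three-term triangle inequality yielding the $3\varepsilon$ bound. Your explicit remark that the same modulus controls the limit $f$ because $f$ is assumed $1$-Lipschitz is exactly the point the paper uses implicitly in the term $d_L(f(a),f(x))\leq\varepsilon$.
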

\begin{proof} Let $\varepsilon >0$ and, using Lemma \ref{equi}, let $\eta(\varepsilon)$ be the uniform equicontinuity module of $Lip_\star^1(G,\Delta^+)$. Since $(K, D, \star)$ is compact, there exists a finite set $A$ such that 
$K=\cup_{a\in A} N_a(\eta(\varepsilon))$. 
Since $d_L(f_n(a),f(a))\longrightarrow 0$, when $n\longrightarrow +\infty$ for all $a\in A$. Then, for each $a\in A$, there exists $P_a\in \N$ such that 
\begin{eqnarray*}
n\geq P_a \Longrightarrow d_L(f_n(a),f(a)) & \leq & \varepsilon
\end{eqnarray*}
Since $A$ is finite, we have that
\begin{eqnarray*}
n\geq \max_{a\in A} P_a \Longrightarrow \sup_{a\in A} d_L(f_n(a),f(a)) & \leq & \varepsilon
\end{eqnarray*}
Thus, for all $x\in K=\cup_{a\in A} N_a(\eta(\varepsilon))$, there exists $a\in A$ such that $x\in  N_a(\eta(\varepsilon))$ and so we have that for all $n\geq \max_{a\in A} P_a$ :
\begin{eqnarray*}
d_L(f_n(x),f(x)) &\leq& d_L(f_n(x),f_n(a)) + d_L(f_n(a),f(a)) + d_L(f(a),f(x))\\
                                                         &\leq& 3\varepsilon.
\end{eqnarray*} 
In other words, 
\begin{eqnarray*}
n\geq \max_{a\in A} P_a \Longrightarrow d_{\infty}(f_n,f):=\sup_{x\in K} d_L(f_n(x),f(x)) &\leq& 3\varepsilon
\end{eqnarray*}
\end{proof}
%
We finally need the following result from  \cite{Ba}.
\begin{lemma} (see \cite[Theorem 3.7]{Ba}) \label{MC} Let $(G, D, \star)$ be a probabilistic metric space such that $\star$ is sup-continuous and let $A$ be a nonempty subset of $G$. Let $f: A\longrightarrow \Delta^+$ be a probabilistic $1$-Lipschitz map. Then, there exists a probabilistic $1$-Lipschitz map $\tilde{f}: G\longrightarrow \Delta^+$ such that $\tilde{f}_{|A}=f$.
\end{lemma}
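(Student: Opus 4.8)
The plan is to build the extension through the explicit sup-convolution formula already introduced earlier in this section, namely to set
$$\tilde{f}(x) := \sup_{y\in A}\left[f(y)\star D(x,y)\right], \qquad x\in G.$$
Since $(\Delta^+,\leq)$ is a complete lattice, this supremum exists in $\Delta^+$ for every $x$, so $\tilde{f}$ is a well-defined map from $G$ into $\Delta^+$. Because $\star$ is sup-continuous, the canonical construction established earlier (the Proposition asserting that $x\mapsto\sup_{y\in A}[f(y)\star D(x,y)]$ is probabilistic $1$-Lipschitz and dominates $f$ on $A$) applies here even though $f$ is a priori defined only on $A$, since both the formula and the proposition's proof evaluate $f$ only at points of $A$. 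This gives at once two of the three facts we need: $\tilde{f}\in Lip^1_\star(G,\Delta^+)$, and $\tilde{f}(x)\geq f(x)$ for all $x\in A$ (concretely, the domination comes from the admissible choice $y=x$, for which $f(x)\star D(x,x)=f(x)\star\mathcal{H}_0=f(x)$).

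It then remains only to prove the reverse inequality $\tilde{f}(x)\leq f(x)$ for every $x\in A$, which is exactly what forces $\tilde{f}_{|A}=f$. Here I would use that $f$ is itself probabilistic $1$-Lipschitz \emph{on} $A$: for $x,y\in A$ the defining inequality $D(x,y)\star f(y)\leq f(x)$ holds, and by commutativity of $\star$ this reads $f(y)\star D(x,y)\leq f(x)$. Taking the supremum over $y\in A$ yields $\tilde{f}(x)=\sup_{y\in A}[f(y)\star D(x,y)]\leq f(x)$. Combined with the domination from the first step, this gives $\tilde{f}(x)=f(x)$ for all $x\in A$, completing the argument.

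I expect no serious obstacle, as the whole proof is the probabilistic analogue of the classical McShane--Katětov extension: the two nontrivial ingredients—that the supremum stays inside $\Delta^+$ and that the sup-convolution is $1$-Lipschitz—have already been isolated (the complete-lattice property of $\Delta^+$ and the sup-continuity of $\star$, respectively). The only points requiring a little care are to invoke the earlier canonical-construction proposition with $f$ read off its restriction to $A$ (so that nothing is assumed about $f$ off $A$), and to apply commutativity of $\star$ when matching the order $f(y)\star D(x,y)$ appearing in the formula against the order $D(x,y)\star f(y)$ appearing in the $1$-Lipschitz axiom.
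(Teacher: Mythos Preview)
Your proof is correct and follows precisely the construction the paper sets up: the paper does not actually prove Lemma~\ref{MC} (it is imported from \cite[Theorem~3.7]{Ba}), but the sup-convolution formula you use is exactly the one isolated in the earlier Proposition, and your verification that $\tilde{f}_{|A}=f$ via the $1$-Lipschitz inequality on $A$ is the standard McShane-type argument. There is nothing to add.
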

\vskip5mm
Let us now give the proof of our main result.
\begin{proof}[Proof of Theorem \ref{AA}] $(1) \Longrightarrow (2)$ Suppose that $(K, D, \star)$ is compact. Let $(f_n)_n \subset Lip^1_\star(K,\Delta^+)$ be a sequence. We need to shows that there exists a subsequence of $(f_n)$ that converges uniformly to some $1$-Lipschitz function. Indeed, since $K$ is compact, it is separable, that is, there existes a sequence $(x_k)_k\subset K$ which is dense in $K$ for the probabilistic metric $D$. Let us denote $L:=\lbrace x_k: k\in \N\rbrace$. We know from Lemma \ref{DS1} and Lemma \ref{DS2}  that $(\Delta^+,\textnormal{w})$ is a compact metrizable space. Thus, by Tykhonov theorem, the space $(\Delta^+)^{\N}$ is a compact metrizable. Hence, the sequence $(f_n)$ has a subsequence $(f_{\varphi(n)})$ that converges pointwise on $L$ to some function $f$, necessarily $1$-Lipschitz map on $L$ by Lemma \ref{Lip}. By Lemma \ref{MC} and Proposition \ref{cont}, $f$ extend to a $1$-Lipschitz map $\tilde{f}$ on $K$, and this extension is unique since $L$ is dense. Let us prove now that $(f_{\varphi(n)})$ converges pointwise on $K$ to $\tilde{f}$. Indeed, let $x\in K$ and let us choose a subsequence from $L$ denoted also by $(x_k)$ that converges to $x$, that is $D(x_k,x)\,{\xrightarrow {\textnormal{w}}}\, \mathcal{H}_0$. Since $(f_n)$ is a sequence of $1$-Lipschitz functions, then we have,
\begin{eqnarray*} 
D(x_k,x)\star f_{\varphi(n)}(x_k) &\leq& f_{\varphi(n)}(x)\\
D(x_k,x)\star f_{\varphi(n)}(x) &\leq& f_{\varphi(n)}(x_k)
\end{eqnarray*}
Since $\Delta^+$ is compact, to show that $(f_{\varphi(n)}(x))$ is convergent sequence, it suffices to prove that any convergent subsequence of  $(f_{\varphi(n)}(x))$ converges to the same limit $\tilde{f}(x)$. Indeed, let $f_{\psi(n)}(x)$ be a subsequence of $f_{\varphi(n)}(x)$ that converges to some $g(x)\in \Delta^+$. Using the above inequalities and Proposition \ref{util}, we get that
\begin{eqnarray*} 
D(x_k,x)\star \tilde{f}(x_k) &\leq& g(x)\\
D(x_k,x)\star g(x) &\leq& \tilde{f}(x_k)
\end{eqnarray*}
Using the fact $D(x_k,x)\,{\xrightarrow {\textnormal{w}}}\, \mathcal{H}_0$, the continuity of $\tilde{f}$ on $K$ and Proposition \ref{util} in the above inequalities, we get that $g(x)=\tilde{f}(x)$. Finally, we proved that there exists a subsequence $(f_{\varphi(n)})$ of $(f_n)$ that converges pointwise on $K$ to some $1$-Lipschitz map $\tilde{f}$. That is, $f_{\varphi(n)}(x)\,{\xrightarrow {\textnormal{w}}}\, \tilde{f}(x)$ for all $x\in K$. Using Lemma \ref{unif}, we get that $(f_{\varphi(n)})$ converges uniformly to $\tilde{f}$ on $K$. 
\vskip5mm
$(2)\Longleftarrow (1)$ Suppose that $(Lip^1_\star(K,\Delta^+),d_{\infty})$ is compact. Let $(x_n)$ be a sequence of $K$. We need to prove that $(x_n)$ has a convergent subsequence. Indeed, consider the sequence $(\delta_{x_n})$ of $1$-Lipschitz maps, defined by $\delta_{x_n}: x\mapsto D(x_n,x)$ for each $n\in \N$. By assumption, there exists a subsequence $(\delta_{x_{\varphi(n)}})$ that converges uniformly to some $1$-Lipschitz map, in particular it is a Cauchy sequence. In other words, we have 
$$\lim_{p,q\longrightarrow +\infty} \sup_{x\in K} d_L(\delta_{x_{\varphi(p)}}(x),\delta_{x_{\varphi(q)}}(x))=0.$$
In particular we have 
$$\lim_{p,q\longrightarrow +\infty} d_L(\delta_{x_{\varphi(p)}}(x_{\varphi(q)}),\mathcal{H}_0)=0.$$
Equivalently,
$$\lim_{p,q\longrightarrow +\infty} d_L(D(x_{\varphi(p)},x_{\varphi(q)}),\mathcal{H}_0)=0.$$
This shows that the sequence $(x_{\varphi(n)})$ is Cauchy in $K$ (see Lemma \ref{DS2}). Thus, the sequence $(x_{\varphi(n)})$ converges to some point $x\in K$, since $K$ is complete. 
\end{proof}

\bibliographystyle{amsplain}

\end{document}